\title{Determinants of weighted path matrices}
\author{Kelli Talaska}
\address{University of California, Berkeley}
\email{talaska@math.berkeley.edu}
\thanks{The author was partially supported by NSF Grant DMS-1004532.}
\subjclass[2010]{Primary 05C30; Secondary 05C21, 11C20, 15B48}
\theoremstyle{plain}
\newtheorem{thm}{Theorem}[section]
\theoremstyle{definition}
\newtheorem{defn}[thm]{Definition}
\newtheorem{example}[thm]{Example}
\numberwithin{equation}{section}
\def\wt{\operatorname{wt}}
\def\sgn{\operatorname{sgn}}
\def\tail{\operatorname{tail}}
\def\P{\mathcal{P}}
\def\PP{\mathbf{P}}
\def\C{\mathcal{C}}
\def\CC{\mathbf{C}}
\def\F{\mathcal{F}}
\def\FF{\mathbf{F}}
\begin{document}

\bibliographystyle{alpha}

\begin{abstract}
We find rational expressions for all minors of the weighted path matrix of a directed graph, generalizing the classical Lindstr\"om/Gessel-Viennot result for acyclic directed graphs. The formulas are given in terms of certain flows in the graph.
\end{abstract}

\maketitle

\section{Introduction}

One of the most elegant results in algebraic combinatorics is that expressing minors of the weighted path matrix of an acyclic graph as a sum over all collections of pairwise vertex-disjoint paths connecting the specified sources and sinks. While this result was (re)discovered and
popularized in combinatorics by Gessel and Viennot \cite{GV1985}, the key ideas can also be found in an earlier, more abstract, formulation by Lindstr\"om \cite{Lindstrom1973} and in Karlin and McGregor's work on coincidence probabilities \cite{KM1959}.

In the acyclic setting, the weighted path matrix has entries which are polynomial in the edge weights of the graph. The main result of this paper is an extension of the above result to graphs which are not necessarily acyclic; in this setting, the weighted path matrix has entries which are formal power series in the edge weights.  In particular, for each minor, we give a rational expression whose numerator and denominator are both polynomials given by simple combinatorial formulas.

The key result in this paper is closely related to work in several other papers. First, we note that Viennot's work on the theory of heaps of pieces \cite{Viennot1986} can be used to derive our rational expressions for the entries of the weighted path matrix, but it does not appear that this approach can be used to obtain the formulas for larger minors.  Fomin's work on loop-erased walks \cite{Fomin2001} also examines the weighted path matrix, giving an expression for each minor as a sum indexed by an infinite but minimal collection of path families satisfying certain intersection criteria.  The path families indexing Fomin's formulas depend on a choice of labeling of the sources and sinks, but our indexing path families are uniquely determined.

The proof of our result involves an involution which generalizes the ``tail-swapping" proof from the acyclic case.  The involution is adapted from the author's work on total positivity in Grassmannians \cite{Talaska2009}, in which planar graphs with directed cycles play a key role, though the analogues of weighted path matrices are rather different in the Grassmannian setting. Besides planarity, there are a number of topological and other technical conditions placed on the graphs used in \cite{Talaska2009}, none of which are necessary in this paper; our formulas hold for all directed graphs.

The classical result for acyclic graphs has many applications, including combinatorial proofs of the Jacobi-Trudi determinantal formulas for Schur functions and MacMahon's formula for the number of plane partitions (see \cite{GV1985} and \cite{Aigner2001} for these and several more well-known examples).  We hope that, with the added flexibility of working with directed cycles, our generalization will have many applications as well. One such example comes from algebraic statistics -- this paper was partly motivated by the author's work with Draisma and Sullivant on Gaussian graphical models. A forthcoming sequel to \cite{STD2010} will use the results of this paper to address models with directed cycles.

\section{Statement of the main theorem}

Let $G$ be a directed graph.  Loops and multiple edges are permitted.  We impose no further conditions on $G$.  In particular, $G$ is permitted to have directed cycles, and we do not assume that $G$ is planar.  Let $V=\{v_1,\ldots,v_n\}$ be the vertex set of $G$.  Assign to each edge $e$ of $G$ the formal variable $x_e$; we call $x_e$ the \emph{weight} of the edge $e$ and assume that all edge weights commute with each other.

A \emph{path} $P=(e_1,e_2,\ldots,e_m)$ in $G$ is formed by traversing the edges $e_1,e_2,\ldots,e_m$ in the specified order.   We write
$P:v\leadsto v'$ to indicate that $P$ is a path starting at a vertex $v$ and ending at a vertex~$v'$.

Define the \emph{weight} of a path $P=(e_1, \ldots, e_m)$ to be
\[
\wt(P)= x_{e_1}\cdots x_{e_m}.
\]

A path $P:v\leadsto v$ with no edges is called a \emph{trivial path} and has weight 1. Paths may or may not have self-intersections; those with no self-intersections will be called \emph{self-avoiding}.

\begin{defn}
\label{def:wt-matrix} The \emph{weighted path matrix} of $G$ is the matrix $M$ whose entries $m_{ij}$ are the formal power series
\begin{equation*}
\label{eq:Aij} m_{ij}=\sum_{P:v_i\leadsto
v_j}\wt(P),
\end{equation*}
the sum over all directed paths $P:v_i \leadsto v_j$.
\end{defn}

This generalizes the classical definition for acyclic graphs.  Note that if $A$ is the weighted adjacency matrix of $G$, then $M=(I-A)^{-1}$.

\begin{figure}[ht]
\begin{center}
\includegraphics{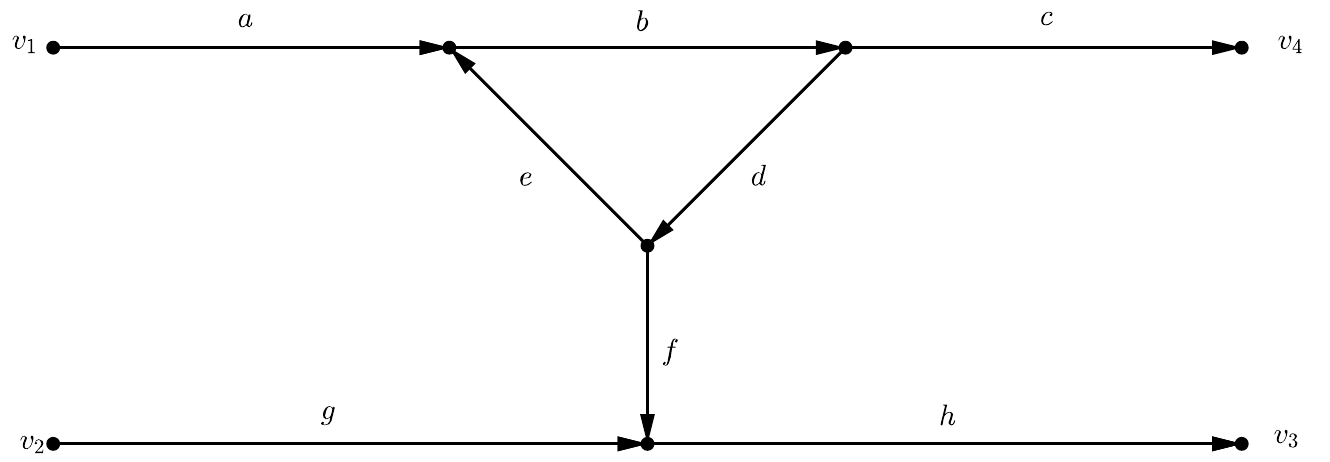}
\end{center}
\caption{An example of a graph $G$ with a directed cycle.}
\label{fig}
\end{figure}

\begin{example}\label{example-main}
Consider the submatrix $M_{\{1,2\},\{3,4\}}$ of the weighted path matrix of the graph $G$ in Figure~\ref{fig}.  For the matrix entry $m_{13}$, we note that any directed path from $v_1$ to $v_3$ must traverse the path with weight $abdfh$, and it may complete an arbitrary number of loops of the cycle of weight $bde$ along the way.  Similarly, for $m_{14}$, any directed path from $v_1$ to $v_4$ must traverse the path with weight $abc$, and it may complete an arbitrary number of loops of the cycle of weight $bde$.  There is a unique path from $v_2$ to $v_3$, with weight $gh$, and there are no paths from $v_2$ to $v_4$.  Thus, we have
\[
M_{\{1,2\},\{3,4\}} =\left(
                      \begin{array}{cc}
                        abdfh(1+bde+(bde)^2+\cdots) & abc(1+bde+(bde)^2+\cdots) \\
                        gh & 0 \\
                      \end{array}
                    \right).
\]
Since this particular example is fairly simple, we can easily see how to write the entries of the matrix as rational expressions:
\[
M_{\{1,2\},\{3,4\}} =\left(
                      \begin{array}{cc}
                        \frac{abdfh}{1-bde} & \frac{abc}{1-bde} \\
                        gh & 0 \\
                      \end{array}
                    \right).
\]
\end{example}

The objective of this paper is to provide such rational expressions for the entries and larger minors of the weighted path matrix when the network may be more complex. We now define several families of paths and cycles needed for the statement and proof of the main theorem.

\begin{defn}\label{def:families}
Let $\C(G)$ denote the set of collections $\CC$ of self-avoiding cycles in $G$ which are pairwise vertex-disjoint.  Each cycle in a nonempty collection $\CC$ much contain at least one edge, and we consider the empty collection an element of $\C(G)$, with weight $1$.

For the remaining families, let $A=\{a_1,\ldots,a_k\}$ and $B=\{b_1,\ldots,b_k\}$ be two subsets of the vertex set $V$, and let $\pi$ be a permutation in the symmetric group $S_k$.

Let $\widetilde{\P}_{A,B,\pi}(G)$ denote the set of path collections $\PP=(P_1,\ldots,P_k)$ such that $P_i$ is a directed path in $G$ from vertex $a_i$ to vertex $b_{\pi(i)}$.  Set $\displaystyle\widetilde{\P}_{A,B}(G)=\bigcup_{\pi\in S_k}\widetilde{\P}_{A,B,\pi}(G)$.

Let $\P_{A,B}(G)$ be the subset of path collections in $\widetilde{\P}_{A,B}(G)$ such that
\begin{itemize}
\item each $P_i$ is self-avoiding, and
\item the paths $P_i$ and $P_j$ are vertex-disjoint whenever $i\neq j$.
\end{itemize}

Let $\F_{A,B}(G)$ denote the set of \emph{self-avoiding flows} connecting $A$ to $B$, i.e., pairs $(\PP,\CC)$ such that
\begin{itemize}
\item $\PP\in\P_{A,B}(G)$,
\item $\CC\in\C(G)$, and
\item $\PP$ and $\CC$ are vertex-disjoint.
\end{itemize}

Informally, a self-avoiding flow in $\F_{A,B}(G)$ is a collection of self-avoiding paths connecting $A$ to $B$ along with a (possibly empty) collection of self-avoiding cycles such that the paths and cycles are all pairwise vertex-disjoint.
\end{defn}

\begin{defn}\label{def:weight}
Define the \emph{weight} of any path collection, cycle collection, or self-avoiding flow to be the product of the weights of the paths and cycles it contains, counting the weights of any repeated edges with the appropriate multiplicities.

Define the \emph{sign} of a path collection, cycle collection, or self-avoiding flow as follows.  If $\PP\in\widetilde{\P}_{A,B,\pi}(G)$, set $\sgn(\PP)=\sgn(\pi)$.  If $\CC\in\C(G)$, let $|\CC|$ denote the number of cycles in $\CC$, and set $\sgn(\CC)=(-1)^{|\CC|}$.  Each self-avoiding flow $\FF$ has a unique decomposition as the disjoint union of a path collection $\PP$ and a cycle collection $\CC$; set $\sgn(\FF)=\sgn(\PP)\sgn(\CC)$.
\end{defn}

\begin{thm}\label{thm:main}
Suppose $G$ is a directed graph with weighted path matrix $M$.  Then the minor $\Delta_{A,B}(M)$, with rows indexed by $A$ and columns indexed by $B$, is given by
\begin{equation}
\label{eq:main-thm}
\Delta_{A,B}(M)=\frac{\displaystyle\sum_{\FF\in\F_{A,B}(G)}\sgn(\FF)\wt(\FF)}{\displaystyle\sum_{\CC\in\C(G)}\sgn(\CC)\wt(\CC)}
\end{equation}
\end{thm}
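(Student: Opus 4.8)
The plan is to clear the denominator and prove the equivalent identity
\[
\Delta_{A,B}(M)\cdot\Big(\sum_{\CC\in\C(G)}\sgn(\CC)\wt(\CC)\Big)=\sum_{\FF\in\F_{A,B}(G)}\sgn(\FF)\wt(\FF).
\]
Since $\det(I-A)$ has constant term $1$, it is a unit in the ring of formal power series in the edge weights, so $M=(I-A)^{-1}$ is well defined and this identity is equivalent to (\ref{eq:main-thm}). The first step is to recognize the denominator as $\det(I-A)$. Expanding $\det(I-A)=\sum_{\sigma}\sgn(\sigma)\prod_i(I-A)_{i,\sigma(i)}$ and splitting each factor into its $\delta$-part and its $-A$-part, every nonzero term corresponds to a subset $T\subseteq V$ on which $\sigma$ restricts to a permutation $\tau$, together with a choice of one edge of $G$ realizing each factor $A_{i,\tau(i)}$. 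Writing $\tau$ as a product of $r$ disjoint cyclic factors and using $\sgn(\tau)=(-1)^{|T|-r}$, the combined sign $(-1)^{|T|}\sgn(\tau)$ collapses to $(-1)^r$. The cyclic factors are exactly the pairwise vertex-disjoint self-avoiding cycles of $G$, so $\det(I-A)=\sum_{\CC\in\C(G)}(-1)^{|\CC|}\wt(\CC)=\sum_{\CC}\sgn(\CC)\wt(\CC)$.

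Next I would expand the left-hand side combinatorially. The permutation expansion of the minor combined with the series $m_{ij}=\sum_{P:v_i\leadsto v_j}\wt(P)$ of Definition~\ref{def:wt-matrix} gives $\Delta_{A,B}(M)=\sum_{\PP\in\widetilde{\P}_{A,B}(G)}\sgn(\PP)\wt(\PP)$, a signed sum over all path collections, where the paths may self-intersect and intersect one another. Multiplying by $\sum_{\CC}\sgn(\CC)\wt(\CC)$ and distributing (valid termwise, since the weight is graded and each graded piece involves only finitely many terms) writes the left-hand side as a signed weighted sum over all pairs $(\PP,\CC)$ with $\PP\in\widetilde{\P}_{A,B}(G)$ and $\CC\in\C(G)$, with sign $\sgn(\PP)\sgn(\CC)$ and weight $\wt(\PP)\wt(\CC)$. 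The self-avoiding flows of Definition~\ref{def:families} are precisely the pairs in which every vertex lies on at most one strand; I call all other pairs \emph{defective}.

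The heart of the proof is a weight-preserving, sign-reversing involution $\iota$ on the set of defective pairs, generalizing tail-swapping. Fixing a linear order on $V$, let $u$ be the smallest vertex at which the combined structure of $(\PP,\CC)$ fails to be locally simple, so that two strands pass through $u$. At $u$ I would reconnect the two distinguished strands: if they belong to two different paths this exchanges their endpoints and alters the underlying permutation by a transposition; if they belong to a single path it splits off (or, in reverse, reabsorbs) a self-avoiding cycle; and if one is a path while the other is a cycle of $\CC$ it absorbs that cycle into the path (or releases it). In every case the reconnection preserves the edge multiset, hence the weight, and flips the sign, since it either changes $\sgn(\PP)$ by a transposition or changes $|\CC|$ by one. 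The fixed points are exactly the non-defective pairs, namely $\F_{A,B}(G)$, so the defective terms cancel in pairs and the surviving sum is the numerator.

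The main obstacle will be making this surgery genuinely canonical, so that $\iota$ is a well-defined involution rather than merely a sign-reversing pairing. The delicate points are: (i) selecting exactly two strands at $u$ when several may pass through it, by a fixed rule pairing a canonical incoming edge with a canonical outgoing edge; (ii) ensuring the reconnection neither creates nor destroys a conflict at any vertex smaller than $u$, so that $u$ remains the smallest defective vertex afterwards and the rule is self-inverse; and (iii) verifying the sign bookkeeping uniformly across the path--path, path--self, and path--cycle cases. Unlike the acyclic setting there is no topological order to exploit, so confirming that "reconnect the two strands at the smallest defective vertex" is an involution is where the real work lies.
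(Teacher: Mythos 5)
Your overall strategy coincides with the paper's: expand $\Delta_{A,B}(M)$ by Leibniz into $\sum_{\PP\in\widetilde{\P}_{A,B}(G)}\sgn(\PP)\wt(\PP)$, clear the denominator, and cancel all ``defective'' pairs $(\PP,\CC)$ --- those not in $\F_{A,B}(G)$ --- via a weight-preserving, sign-reversing involution generalizing tail-swapping. (Your identification of the denominator with $\det(I-A)$ is correct but unnecessary; all that is needed is that its constant term is $1$, so it is a unit in the power series ring.) However, the involution is the entire content of the theorem, and you have not constructed it: you name the three delicate points (canonical selection of two strands at a vertex, stability of the selection rule under the surgery, uniform sign bookkeeping) and defer them as ``where the real work lies.'' As written, this is a proof strategy, not a proof.

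Moreover, the one concrete design decision you do make --- operate at the smallest defective vertex $u$ with respect to a fixed linear order on $V$ --- is not merely incomplete but fails. Consider a pair in which a single path $P_i$ self-intersects at $u$, no other path or cycle of $\CC$ passes through $u$, and the loop that $P_i$ traverses between its two visits to $u$ passes through a vertex $w$ with $w>u$ lying on a cycle $L\in\CC$. Then $u$ is the smallest defective vertex, and the only possible reconnection at $u$ is to split off that loop $\ell$ and place it in the cycle collection; but $\ell$ and $L$ share the vertex $w$, so $\CC\cup\{\ell\}\notin\C(G)$, and your map exits the set of admissible pairs. The paper's involution is organized precisely to avoid this: conflicts are ordered not by a global vertex order but by traversal order along a distinguished path (smallest path index $i$, then the first conflict vertex along $P_i$), tail swaps with later paths take priority, and when $P_i$ both self-intersects and meets $\CC$, one performs whichever of ``split off a cycle'' or ``absorb a cycle'' corresponds to the \emph{earlier} event along $P_i$. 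In the configuration above this absorbs $L$ into $P_i$ at $w$ rather than splitting at $u$, which is exactly what keeps the image inside $\widetilde{\P}_{A,B}(G)\times\C(G)$ and makes the map self-inverse. Repairing your vertex-order rule essentially forces one to rediscover this path-traversal ordering, so the gap is a genuine missing idea rather than a routine verification.
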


In general, the fraction in equation (\ref{eq:main-thm}) is not reduced, i.e. the numerator and denominator share a common factor.   Consider the connected components $H_1, H_2, \ldots, H_a$ of the subgraph $G_\circ$ consisting of all edges which appear in at least one directed cycle of $G$.  It can be easily verified that the denominator above can be factored as a product over the connected components of $G_\circ$, i.e.
\[
\sum_{\CC\in\C(G)}\sgn(\CC)\wt(\CC)=\prod_{i=1}^a \sum_{\CC\in \C(H_i)}\sgn(\CC)\wt(\CC).
\]
Each factor $\displaystyle\sum_{\CC\in\C(H_i)}\sgn(\CC)\wt(\CC)$ corresponding to a connected component $H_i$ of $G_\circ$ is irreducible, and it will cancel if and only if every collection of (self-avoiding, pairwise vertex-disjoint) paths in $\P_{A,B}(G)$ avoids the component $H_i$.

\begin{example}
Let us use Theorem~\ref{thm:main} to compute some minors of the weighted path matrix of the graph $G$ in Figure~\ref{fig}. There are two cycle collections in $\C(G)$, the empty collection and the single cycle of weight $bde$, so the denominator of every minor, before canceling common factors, will be $1-bde$.

The matrix entry $m_{13}$ is the $1\times1$ minor $\Delta_{\{1\},\{3\}}(M)$.  There is a unique self-avoiding flow from $v_1$ to $v_3$, namely the path of weight $abdfh$.   Thus, equation~(\ref{eq:main-thm}) tells us that
\[
m_{13}=\frac{abdfh}{1-bde},
\]
which is consistent with our calculations in Example~\ref{example-main}.

For the matrix entry $m_{23}=\Delta_{\{2\},\{3\}}(M)$, we again have a unique self-avoiding path, but in this case there are two self-avoiding flows, since the cycle of weight $bde$ does not intersect the path of weight $gh$.  Thus,
\[
m_{23}=\frac{gh+gh(-bde)}{1-bde}=\frac{gh(1-bde)}{1-bde}=gh,
\]
as we previously showed.

If we look at the $2\times 2$ determinant $|M_{\{1,2\},\{3,4\}}|$, we find a single self-avoiding flow connecting $A=\{a_1=v_1,a_2=v_2\}$ to $B=\{b_1=v_3,b_2=v_4\}$, and the sign of this flow is negative, since the pair of paths connecting $A$ to $B$ sends $a_1$ to $b_2$ and $a_2$ to $b_1$. Equation~(\ref{eq:main-thm}) implies that, as expected, the determinant is
\[
|M_{\{1,2\},\{3,4\}}|=\frac{-abcgh}{1-bde}.
\]
\end{example}

\begin{proof}[Proof of Theorem \ref{thm:main}:]
Using the Leibniz expansion of the determinant, we have
\[
\Delta_{A,B}(M)=\sum_{\pi\in S_k}\sgn(\pi)\prod_{1\leq i\leq k}M_{i,\pi(i)}.
\]
Using Definitions \ref{def:wt-matrix}, \ref{def:families}, and \ref{def:weight}, we can rewrite this as
\[
\Delta_{A,B}(M)=\sum_{\pi\in S_k}\left(\sgn(\pi)\sum_{\PP\in\widetilde{\P}_{A,B,\pi}(G)}\wt(\PP)\right)=\sum_{\PP\in\widetilde{\P}_{A,B}(G)}\sgn(\PP)\wt(\PP).
\]
Thus, to prove Theorem~\ref{thm:main}, it would suffice to show that
\[
\sum_{\CC\in\C(G)}\sum_{\PP\in\widetilde{\P}_{A,B}(G)}\sgn(\PP)\wt(\PP)\sgn(\CC)\wt(\CC)=\sum_{\FF\in\F_{A,B}(G)}\sgn(\FF)\wt(\FF).
\]

That is, all terms on the left cancel except those for which $(\PP,\CC)\in\F_{A,B}(G)$, since each self-avoiding flow $\FF=(\PP,\CC)$ satisfies $\sgn(\FF)\wt(\FF)=\sgn(\PP)\wt(\PP)\sgn(\CC)\wt(\CC)$.  We prove this by constructing a weight-preserving and sign-reversing involution on pairs $(\PP,\CC)\notin \F_{A,B}(G)$.

Suppose that $\PP=(P_1, \ldots, P_k)\in\widetilde{\P}_{A,B}(G)$ and $\CC\in\C(G)$, but $(\PP,\CC)\notin \F_{A,B}(G)$.  This means at least one of the following must be true:
\begin{itemize}
\item there exists an $i$ such that $P_i$ is not self-avoiding,
\item there exists a pair $i\neq i'$ such that $P_i$ and $P_{i'}$ share a common vertex, or
    \item there exists an $i$ such that $P_i$ and $\CC$ share a common vertex.
\end{itemize}

Define $\varphi(\PP,\CC)=(\PP^*,\CC^*)$ as follows. Choose the smallest $i$ such that $P_i$ is not self-avoiding or shares a common vertex with $\CC$ or with some $P_{i'}$ with $i'>i$.  Then, following the algorithm below, we either swap the tails of two paths, as in Figure~\ref{fig:tail-swap}, or we move a cycle from $P_i$ to $\CC$ or vice versa, as in Figure~\ref{fig:move-cycle}.

Let $P_i=(e_1,\ldots, e_m)$.  Let $\tail(e_i)$ denote the tail of the edge $e_i$.  (Each edge points from its tail to its head.)  Choose the smallest $q$ such that the vertex $\tail(e_q)$ lies in $\CC$ or in some $P_{i'}$ with $i'>i$, or $\tail(e_q)=\tail(e_r)$ for some $r>q$.

\begin{itemize}
\item If $\tail(e_q)$ lies in some $P_{i'}$ with $i'>i$, choose the smallest such $i'$.  (This case allows for the possibility that $P_i$ also intersects itself or $\CC$ at the vertex $\tail(e_q)$.)  We then swap the tails of $P_i$ and $P_{i'}$ as follows.  Let $P_{i'}=(h_1, \ldots, h_{m'})$, and choose the smallest $q'$ such that $\tail(h_{q'})=\tail(e_q)$.  Set $P_i^*=(e_1, \ldots,e_{q-1}h_{q'}h_{q'+1}, \ldots,h_{m'})$ and $P_{i'}^*=(h_1, \ldots, h_{q'-1},e_q, e_{q+1},\ldots,e_m)$. Set $\PP^*=\PP\setminus \{P_i,P_{i'}\}\cup\{P_i^*,P_{i'}^*\}$ and set $\CC^*=\CC$.  Note that $\PP^*\neq \PP$, since $P_i$ and $P_{i'}$ have different endpoints.  \footnote{Technically, we have swapped the heads of the two paths, but it is common to refer to such an operation as tail swapping.}

\begin{figure}[ht]
\begin{center}
\includegraphics{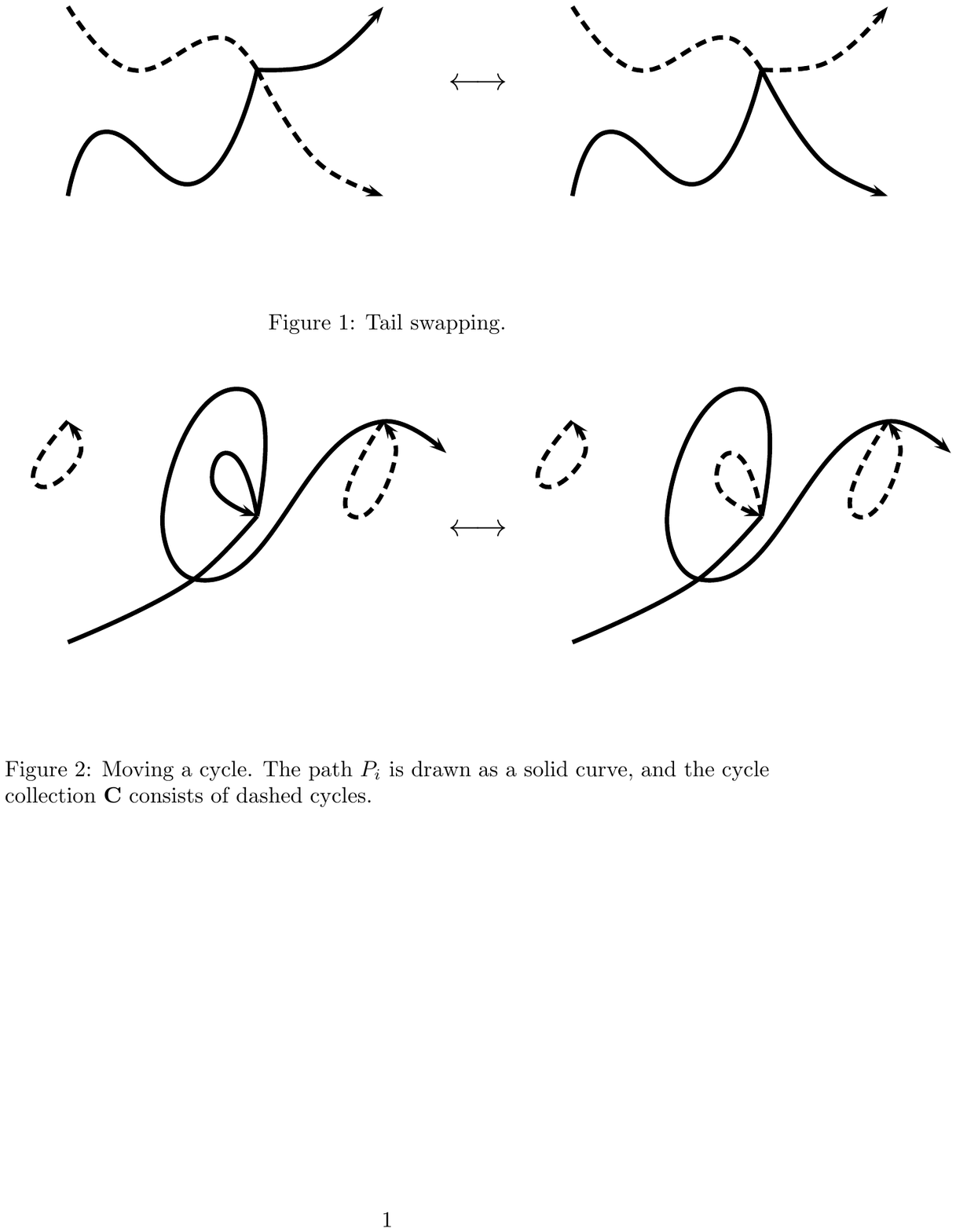}
\end{center}
\caption{Tail swapping.  The paths $P_i$ and $P_i^*$ are drawn as solid curves, and the paths $P_{i'}$ and $P_{i'}^*$ are drawn as dashed curves.}
\label{fig:tail-swap}
\end{figure}

\item Otherwise, we find the first point along $P_i$ where we can move a cycle from $\CC$ to $P_i$ or vice versa, as follows.  If $P_i$ is not self-avoiding, let $\ell$ be the first cycle that $P_i$ completes.  More precisely, choose the smallest $t$ such that $\tail(e_s)=\tail(e_t)$ for some $s<t$; then $\ell=(e_s,e_{s+1},\ldots, e_{t-1})$.  If $P_i$ is self-avoiding, then set $t=\infty$.  If $\CC$ intersects $P_i$, choose the smallest $u$ such that $\tail(e_u)$ appears in a (necessarily unique) cycle $L=(l_1,l_2,\ldots, l_w)$ in $\CC$, where $\tail(l_1)=\tail(e_u)$.  If $\CC$ and $P_i$ are vertex-disjoint, then set $u=\infty$. At least one of $t$ or $u$ must be finite, and $t\neq u$. Note that both $t$ and $u$ may be distinct from $q$, as is the case in Figure~\ref{fig:move-cycle}.
    \begin{itemize}
    \item[$\circ$] If $t<u$, move $\ell$ from $P_i$ to $\CC$.  Set $\CC^*=\CC\cup \{\ell\}$, $P_i^*=(e_1, \ldots, e_{s-1}, e_t, \ldots, e_m)$, and $\PP^*=\PP\setminus \{P_i\}\cup \{P_i^*\}$.
    \item[$\circ$] If $u<t$, move $L$ from $\CC$ to $P_i$, in the following position.  Set $\CC^*=\CC\setminus \{L\}$, $P_i^*=(e_1,\ldots, e_{u-1},l_1, \ldots, l_w, e_u, \ldots, e_m)$, and $\PP^*=\PP\setminus \{P_i\}\cup\{P_i^*\}$.
    \end{itemize}
\begin{figure}[ht]
\begin{center}
\includegraphics{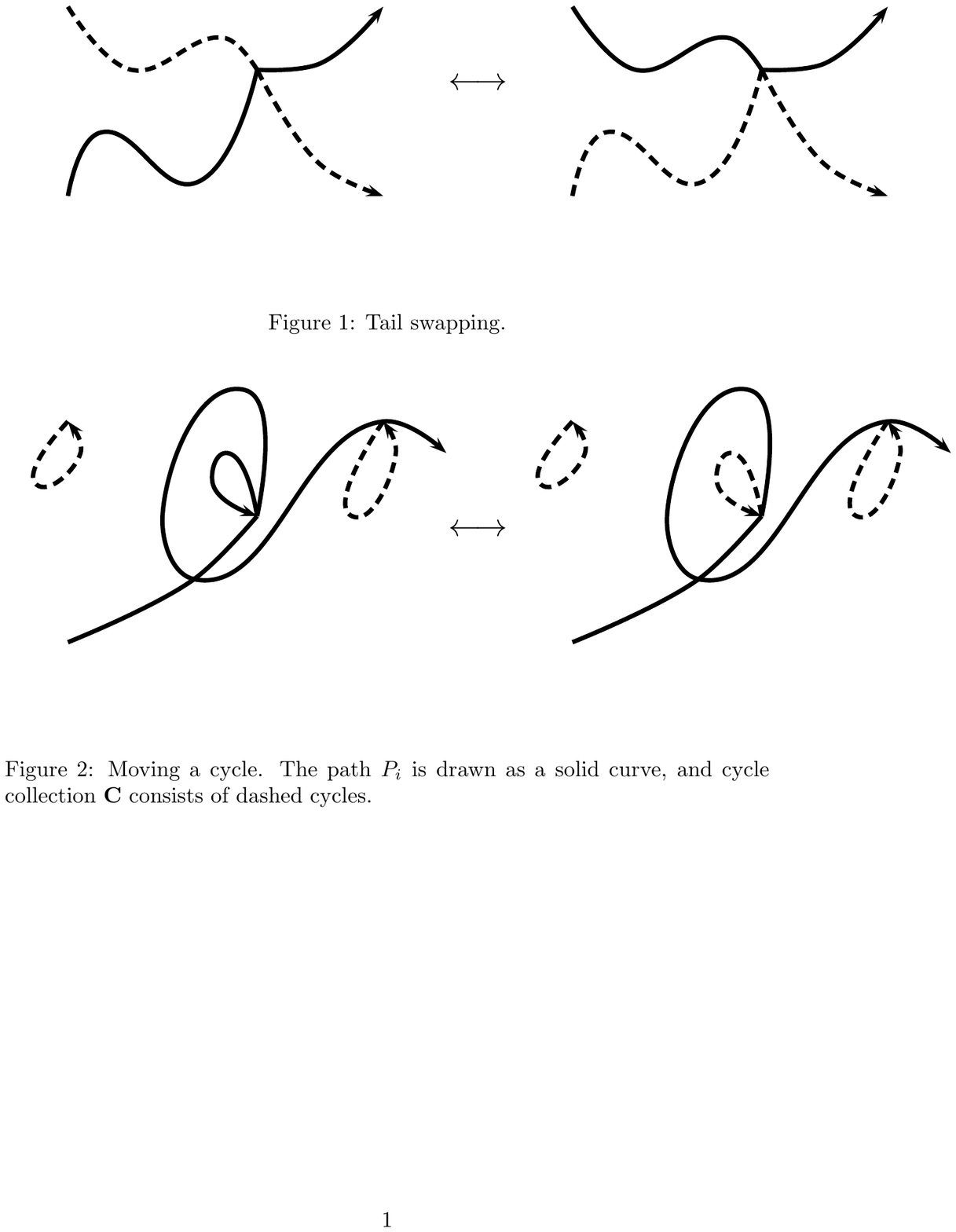}
\end{center}
\caption{Moving a cycle. The paths $P_i$ and $P_i^*$ are drawn as solid curves, and the cycle collections $\CC$ and $\CC^*$ are drawn as dashed cycles.}
\label{fig:move-cycle}
\end{figure}
\end{itemize}

It is easy to see that, with this definition, the image $(\PP^*,\CC^*)$ is again a pair of the required kind, i.e. $\PP^*\in\P_{A,B}(G)$, $\CC^*\in\C(G)$, and $(\PP^*,\CC^*)\notin \F_{A,B}(G)$.

Let us verify that $\varphi$ is an involution. First, we check that $\varphi$ does not change the value of the index $i$.  That is, among all paths in $\PP^*$ which have self-intersections or intersections with other paths in $\PP^*$ or cycles in $\CC^*$, the path with the smallest index is $P_i^*$.  Indeed, our moves only affect $P_i$, $P_{i'}$, and $\CC$, keeping their combined set of edges intact, so the involution will not introduce a new self-intersection in any path $P_j$ such that $j<i$, nor will it introduce a new intersection between such a path $P_j$ and any other path or cycle.

Consider the first case, in which $\varphi(\PP,\CC)=(\PP^*, \CC)$.  After swapping tails, $P_i^*$ still has no intersections with $\CC$ or any of the other paths before the vertex $\tail(e_q)$.  Further, $P_i^*$ does not have any self intersections before $\tail(e_q)$ -- though it may have self-intersections at $\tail(e_q)$ -- since $P_i$ did not have any self-intersections before $\tail(e_q)$ and the tail of $P_{i'}$ did not intersect $P_i$ before $\tail(e_q)$.  Thus $\tail(e_q)$ remains the first vertex along $P_i^*$ with an intersection.  Now, $P_i^*$ and $P_{i'}^*$ intersect at this vertex, and no path with smaller index intersects $P_i^*$ at $\tail(e_q)$, so applying $\varphi$ again swaps the same tails.

Consider the second case, in which $\varphi(\PP,\CC)=(\PP^*,\CC\setminus\{L\})$ or $\varphi(\PP,\CC)=(\PP^*,\CC\cup\{\ell\})$.  Here $P_i$ intersects itself or $\CC$ at the vertex $\tail(e_q)$, but does not intersect any other path at this vertex.  After moving a cycle, the same is true for $P_i^*$. If the cycle moved starts at $\tail(e_q)$, i.e. $\tail(e_q)=\tail(e_t)$ or $\tail(e_q)=\tail(e_u)$, then either a self-intersection becomes an intersection with $\CC$, or an intersection with $\CC$ becomes a self-intersection. If the cycle moved starts later, then the intersections at $\tail(e_q)$ remain the same type after applying $\varphi$. If $P_i$ intersects $\CC$ before completing its first cycle, then $P_i^*$ will complete its first cycle before intersecting $\CC\setminus \{L\}$.  If $P_i$ completes its first cycles $\ell$ before intersecting $\CC$, then $P_i^*$ will intersect $\CC\cup\{\ell\}$ before completing its first cycle.  Thus, the same cycle is moved twice if we apply $\varphi$ twice, which completes the proof that $\varphi$ is an involution.

It remains to verify that $\varphi$ is sign-reversing and weight-preserving.  In the first case, with $\varphi(\PP,\CC)=(\PP^*, \CC)$, the permutations corresponding to $\PP$ and $\PP^*$ differ by a single transposition, so $\sgn(\PP^*)=-\sgn(\PP)$.  The path collections $\PP$ and $\PP^*$ use the same multiset of edges, so $\wt(\PP^*)=\wt(\PP)$.  Since $\CC$ remains fixed, $\sgn(\CC)$ and $\wt(\CC)$ introduce no changes.

In the second case, with $\varphi(\PP,\CC)=(\PP^*, \CC\setminus\{L\})$ or $\varphi(\PP,\CC)=(\PP^*, \CC\cup\{\ell\})$, the endpoints of paths in $\PP$ are the same as those of the corresponding paths in $\PP^*$, so $\sgn(\PP^*)=\sgn(\PP)$.  However $|\CC^*|=|\CC|\pm 1$, so $\sgn(\CC^*)=-\sgn(\CC)$.  Though edges are moved between $\PP$ and $\CC$, the pairs $(\PP, \CC)$ and $(\PP^*,\CC^*)$ both use precisely the same multiset of edges, so $\wt(\PP^*)\wt(\CC^*)=\wt(\PP)\wt(\CC)$.

In each case, the overall sign changes, and the weight is preserved, completing our proof.

\end{proof}

\section*{Acknowledgements}

The author wishes to thank Sergey Fomin, and Lauren Williams for helpful conversations and comments on early versions of this paper.

\bibliography{bib-path-det}

\end{document}